\numberwithin{equation}{section}
\newtheorem{proposition}[equation]{Proposition}
\newtheorem{lemma}[equation]{Lemma}
\newtheorem{definition}[equation]{Definition}
\newtheorem{theorem}[equation]{Theorem}
\newtheorem*{thm1.3p}{Theorem 1.3'}
\newtheorem*{thm1.3pp}{Theorem 1.3''}
\newcommand{\eref}[1]{(\ref{#1})}
\newcommand{\eps}{\varepsilon}
\newcommand{\vphi}{\varphi}
\newcommand{\Cb}{\mathbb{C}}
\newcommand{\Nb}{\mathbb{N}}
\newcommand{\Rb}{\mathbb{R}}
\newcommand{\Fl}{\mathcal{F}}
\DeclareMathOperator{\Hess}{Hess}
\def\blfootnote{\xdef\@thefnmark{}\@footnotetext}
\begin{document}
\title{A gradient estimate for harmonic functions sharing the same zeros}
\author{Dan Mangoubi}
\date{}
\maketitle

\begin{abstract}
Let $u, v$ be two harmonic functions in $\{|z|<2\}\subset\Cb$ 
which have exactly the same set $Z$ of zeros.
We observe that $\big|\nabla\log |u/v|\big|$ is bounded in the unit disk
by a constant which depends on $Z$ only. In case $Z=\emptyset$ this goes back
to Li-Yau's gradient estimate for positive harmonic functions.
The general boundary Harnack principle gives
 H\"older estimates on $\log |u/v|$.
\end{abstract}

\section{Introduction}
\subsection{Background and statement }
Consider a positive harmonic function $u$ 
in the disk of radius two
\mbox{$B_2\subset\Cb$}. The Harnack inequality gives a bound  on 
\mbox{$|\log u(z_1)-\log u(z_2)|$} 
where $z_1, z_2$ run in the unit disk.
This bound is independent of $u$.
If we let $v=1$ be the constant function then this is the same as 
saying that 
\begin{equation}
\left|\log\frac{u(z_1)}{v(z_1)}-\log \frac{u(z_2)}{v(z_2)}\right|
\end{equation}
is bounded by a constant independent on $u$ when 
$z_1, z_2\in B_1$. 

More generally, it is natural to ask what can be said about the quotient
$u/v$ in case $u, v$ are harmonic functions in $B_2$ which do change sign in $B_2$ and have exactly the same set of zeros.
The aim of this short note is to give an answer to this question in
two dimensions and to pose two related natural questions.

Let us assume for clarity of the introduction that
  $u=f\cdot v$ for some smooth function $f>0$ (in fact, below we show that this is always the case).
The boundary Harnack principle (BHP) (\cites{ancona-78, wu-78, caffarelli-fabes-mortola-salsa-81, jer-ken82, bal-vol96, pop-vol98})
applied to our situation shows that if $\Omega$ is a connected component of $\{u\neq 0\}$ then $\log u/v$ is a $C^\alpha$-function near $\partial\Omega\cap B_1$ for some $0<\alpha<1$.
 In this note we make the observation, maybe known to experts, that in fact we have a $C^1$-bound.
  Namely, we show that
$|\nabla\log \frac{u}{v}|$ is bounded in $B_1$. 
In the case $v$ is the constant function and $u$ is a positive
harmonic function this goes back to
Li-Yau's gradient estimate (\cite{li-yau-86}).

Another point of view of our observation is to say that
if in the BHP the harmonic functions can be extended across the boundary of the domain, then one has a $C^1$-bound on $\log u/v$.
In fact, an example due to Carlos Kenig (\S\ref{sec:kenig})
shows that one cannot in general obtain a $C^1$-bound
even  when the boundary of the domain
is composed of straight lines.
The precise result we show is
\begin{theorem}
\label{thm:main}
Let $Z\subset B_2$.
Let 
$$\Fl(Z)=\{ u\in C^2(B_2) |\ \Delta u=0 \mbox{ and } \{u=0\}=Z\} $$
Then for all $u, v\in \Fl(Z)$ $u/v$ extends to a smooth nowhere vanishing
function in $B_2$ and there exists a constant $C_Z>0$ such that 
$$\forall u, v\in\Fl(Z) \quad 
\big|\nabla\log |u/v|\big|\leq C_Z\mbox{ in } B_1.$$
\end{theorem}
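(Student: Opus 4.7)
The statement has two parts: that $f := u/v$ extends to a smooth, nowhere-vanishing function on $B_2$, and that $|\nabla \log|f||$ admits a bound on $B_1$ depending only on $Z$. I would address the smoothness first, as a local question near points of $Z$.

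To handle a given point $z_0 \in Z$, write $u = \Re F$, $v = \Re G$ in a neighborhood of $z_0$ with $F, G$ holomorphic. The zero set of a 2D harmonic function vanishing to order $k$ at a point consists of $k$ analytic arcs meeting transversally at that point. Since $u$ and $v$ share the same zero set $Z$ near $z_0$, $F$ and $G$ must vanish to the same order $k = k(z_0)$, a quantity depending only on $Z$. I would then choose a holomorphic coordinate $w$ centered at $z_0$ with $G(z(w)) = w^k$, so that $v = \Re(w^k)$, and write $F(z(w)) = w^k \Phi(w)$ for a holomorphic $\Phi$ with $\Phi(0) \neq 0$. In these coordinates $\{v = 0\}$ is the union of the $k$ lines through the origin along which $w^k$ is purely imaginary, and the condition $\{u = 0\} = \{v = 0\}$ forces $\Phi$ to take only real values on each of these lines. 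Iterated Schwarz reflection (or a direct Taylor coefficient computation) then gives a structural factorization $\Phi(w) = B(w^k)$ for some holomorphic $B$. Setting $\zeta = w^k = \xi + i\eta$ and writing $B = P + iQ$, one has $v = \xi$ and $u = \xi P - \eta Q$. Since $u$ vanishes on $\{\xi = 0\}$ with $v$, one extracts $Q(0, \eta) \equiv 0$, whence $Q = \xi \tilde Q$ for some real-analytic $\tilde Q$, and thus $u/v = P - \eta \tilde Q$ extends smoothly across $z_0$ with nonzero value $P(0,0) = \Phi(0)$ there. Combined with the obvious smoothness on $B_2 \setminus Z$, this gives the smooth nonvanishing global extension.

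For the uniform bound on $|\nabla \log|u/v||$ over all $(u, v) \in \Fl(Z)^2$, I would argue by compactness. After normalizing $\|u_n\|_{L^\infty(\ol B_{3/2})} = \|v_n\|_{L^\infty(\ol B_{3/2})} = 1$, any would-be blow-up sequence $(u_n, v_n)$ has, up to subsequence, $C^\infty_{\mathrm{loc}}(B_2)$-limits $u_\infty, v_\infty$ that are nontrivial harmonic functions vanishing on $Z$. The main difficulty, and probably the hardest step, is that $\Fl(Z)$ is \emph{not} closed under this convergence: the zero sets of $u_\infty$ and $v_\infty$ may properly contain $Z$ (with some component of $B_2 \setminus Z$ being collapsed in the limit), and may even fail to coincide. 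One therefore needs either a more refined normalization that prevents such collapse (for instance by prescribing the value of $u_n$ and $v_n$ at a chosen point of each nodal component), or an extension of the local analysis above that accommodates pairs whose common zero set locally exceeds $Z$. Once this obstruction is resolved, a standard blow-up / contradiction argument yields the required $C^1$-bound depending only on $Z$.
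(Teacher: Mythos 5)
Your Part 1 (smoothness of the quotient) is a correct alternative to the paper's argument. The paper works in the real category: it divides off the normal form $v_k=\Im z^k$ through a product-of-linear-factors lemma (Lemmas~\ref{lem:division-linear}--\ref{lem:division-by-vk}) and rules out a zero of $u/v_k$ on $Z$ by an order-of-vanishing count for harmonic functions. You instead pass to holomorphic primitives, normalize $G=w^k$, and use the rotational symmetry extracted from Schwarz reflections across the nodal rays to show $\Phi(w)=B(w^k)$; the factor $\xi$ is then pulled out of $Q$. Both routes reach the same local conclusion; your version makes the two-dimensional specificity (holomorphicity) explicit, which is also where the paper's dimension restriction enters. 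One small point to tighten: you conclude that the value at $z_0$ is $\Phi(0)\neq 0$, but what actually appears is $P(0,0)=\Re B(0)$; you should note that $B(0)$ is in fact real (since $B$ is real on $\{\xi=0\}$), so $\Re B(0)=\Phi(0)\neq 0$.

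Part 2 has a genuine gap, and you have named it yourself: compactness is the wrong tool here and you do not close the argument. The obstruction is not merely that $Z(u_\infty)$ may exceed $Z$. Even if $u_\infty,v_\infty\in\Fl(Z)$, the quantity $\nabla\log|u_n/v_n|$ need not converge to $\nabla\log|u_\infty/v_\infty|$ along a sequence of points $p_n\to p\in Z$, because $C^\infty_{\mathrm{loc}}$ convergence of $u_n,v_n$ gives no uniform control over the quotient near the common zero set --- controlling that quotient near $Z$ is precisely the content of the theorem. So the blow-up scheme is circular unless you already have a $Z$-local a priori estimate in hand. The paper never uses compactness of $\Fl(Z)$. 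It proves a genuinely quantitative bound: in the normal form it establishes a Bochner-type differential inequality for $F=|\nabla\log|u/v_k||^2$ (Lemma~\ref{lem:bochner}), with the sign of the potentially singular term $\langle\nabla F,\nabla v_k\rangle/v_k$ controlled at a maximum point via Lemmas~\ref{lem:nablav-nablaF} and~\ref{lem:second-derivative}, and then runs the usual Li--Yau cutoff argument to get an explicit bound $4(k+1)\sqrt{A}$. (Section~\ref{sec:sodin} gives an even shorter proof by the Poisson formula for a sector.) The reduction to general $Z$ is then done by covering $Z\cap\overline{B}_1$ by finitely many conformally-straightened neighborhoods and using Li--Yau off $Z$; only $Z\cap\overline{B}_1$ needs to be compact, not $\Fl(Z)$. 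To repair your proof you would have to replace the compactness step by some such direct estimate in the model case.
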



\begin{subsection}{Two questions}
We pose two natural questions 
which arise naturally from Theorem~\ref{thm:main}.
\vspace{.8ex}

\noindent\textbf{Deformations of the zero set.}
A related result close in spirit to Theorem~\ref{thm:main} is proved in~\cite{nad99}. In this paper
Nadirashvili considers a variable-sign 
harmonic function in $B_2$, and proves the existence of a bound on 
$|u|$ in $B_1$ in terms of the first $3k$ derivatives of $u$ at $0$,
where $k$ is the number of connected components
of $B_2\setminus Z$. 
Nadirashvili's result 
hints that it may be true that the constant $C_Z$ in Theorem~\ref{thm:main} depends only on
the number of components of $B_2\setminus Z$.
Equivalently, we ask whether it is true that the constant $C_Z$
depends only on the number of intersection points of $Z$ with the
circle $\{|z|=3/2\}$.
\vspace{.7ex}

\noindent\textbf{Higher dimensions.}
We ask whether Theorem~\ref{thm:main} is a coincidence of dimension
two or stays true in dimension three.
It seems that even the case where $Z$ is the zero set of a quadratic harmonic polynomial in three variables is
 not known.
 \vspace{.8ex}
 
We hope to investigate both questions in the near future.
\end{subsection}

\subsection{Idea of proof} 
We give two proofs of Theorem~\ref{thm:main}.
Both proofs are built upon the cases where $Z$ is in normal form (say, $Z=\{\Im z^k=0\}$). This is the reason why our method
is restricted to two dimensions.

The first proof we give is based on the maximum principle 
in a similar spirit to~\cite{li-yau-86}.
 In a normal case we find a certain
positive definite quadratic form which leads to a Bochner type estimate.
In the case where $Z$ is empty (the Li-Yau case) the positivity of the quadratic form we define
is evident. Then, we use the Bochner type estimate to get a gradient estimate.

The second (shorter) proof, due to Misha Sodin,
is based on the Poisson formula for sectors of the plane.

We decided to keep both proofs in this note since the first
proof makes almost no use of explicit formulas, and
we hope it may be useful to extend Theorem~\ref{thm:main}
to situations where no explicit formulas exist.



\subsection{Organization of the paper}
The proof of the normal form case of Theorem~\ref{thm:main} 
is given in Section~\ref{sec:proof-normal-form}. The reduction
to the normal form case is done in Section~\ref{sec:proof}.
In Section~\ref{sec:sodin} we give an alternative proof
using Poisson formulas due to Misha Sodin. In sections~\ref{sec:formulas}-\ref{sec:singularity}
we develop different ingredients of the proof:
In Section~\ref{sec:formulas} we calculate the singular elliptic equation
satisfied by the quotient of two harmonic functions.
We also find a Bochner type formula.
In Section~\ref{sec:quadratic} we define and 
prove the positivity of a certain 
quadratic form needed in the proof of a Bochner type Inequality proved
in Section~\ref{sec:bochner}. In Section~\ref{sec:singularity}
we treat several expressions which involve logarithmic singularities.
Finally, in Section~\ref{sec:examples} we give several examples which illustrate Theorem~\ref{thm:main} and clarify it.
\vspace{1ex}

\subsection{Acknowledgements} I am happy to thank 
Jozef Dodziuk for  preliminary discussions 
on Li-Yau's gradient estimates.
I am grateful to David Kazhdan and Leonid Polterovich for
asking me  questions which led me to the present work.  
I thank Benji Weiss for an interesting example (\S\ref{sec:examples}).
I thank Gady Kozma and Fedja Nazarov
 for their interest in this work and for referring me
 to the BHP. I especially thank Misha Sodin for his encouragement and for finding a second simple proof~(\S\ref{sec:sodin}).
 I am grateful to  Carlos Kenig for his interest and
 for an illuminating example~(\S\ref{sec:kenig}) calrifying the relation of this note to the BHP.
 I am grateful to Charlie Fefferman for his advice and interest.
 I thank S.-T. Yau for his support.
  This research was supported by ISF grant 225/10 and by 
 BSF grant 2010214.
 
\section{A semi-linear singular elliptic equation}
\label{sec:formulas}
\begin{lemma}
\label{lem:h-pde}
Let $f>0$ be a smooth function. Let $v$ be harmonic.
If $fv$ is harmonic then
$h:=\log f$ satisfies the following second order quasilinear 
degenerate elliptic equation
\begin{equation}
\label{eqn:quasi-linear-degenerate-elliptic}
v\Delta h+2\langle \nabla v, \nabla h  \rangle 
+v|\nabla h|^2 =0\ .
\end{equation}
\end{lemma}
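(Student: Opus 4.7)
The plan is to carry out a direct computation starting from the harmonicity of $u = fv$ and rewriting everything in terms of $h = \log f$.

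First I would expand $\Delta(fv) = 0$ using the product rule for the Laplacian:
$$v\,\Delta f + 2\langle\nabla f,\nabla v\rangle + f\,\Delta v = 0.$$
Since $v$ is harmonic the last term vanishes, leaving $v\,\Delta f + 2\langle\nabla f,\nabla v\rangle = 0$.

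Next I would translate the derivatives of $f = e^h$ into derivatives of $h$. We have $\nabla f = f\,\nabla h$ and, by a second application of the chain/product rule, $\Delta f = f\bigl(\Delta h + |\nabla h|^2\bigr)$. Substituting these expressions yields
$$vf\bigl(\Delta h + |\nabla h|^2\bigr) + 2f\langle\nabla h,\nabla v\rangle = 0,$$
and since $f > 0$ I can divide through by $f$ to obtain \eqref{eqn:quasi-linear-degenerate-elliptic}.

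There is really no obstacle in this lemma — it is a one-line verification once one notices that the equation for $u = fv$ gives a linear relation between $\Delta f$ and $\nabla f\cdot\nabla v$, and that under $f = e^h$ both of those objects have clean expressions in $h$. The only point to emphasize in the write-up is that the equation is genuinely degenerate elliptic on $\{v = 0\}$, which motivates the analysis in the later sections; the derivation itself needs no hypothesis beyond smoothness of $f$ and harmonicity of $v$ and $fv$.
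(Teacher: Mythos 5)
Your proof is correct and is essentially the same direct computation as the paper's (expand $\Delta(e^h v)$ via the product and chain rules, use harmonicity of $v$, divide by $e^h>0$); yours just spells out the intermediate steps the paper compresses into one line.
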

\begin{proof}
By the chain rule,
$$0 = \Delta(e^h v) 
= 2e^h\left(v\Delta h+\langle \nabla v, \nabla h\rangle 
+ v|\nabla h|^2\right)\ .$$
\end{proof}
\begin{lemma}
\label{lem:DeltaF}
Let $v$ be a smooth function and let $h$ satisfies 
equation~\eref{eqn:quasi-linear-degenerate-elliptic}.
Let $F=|\nabla h|^2$. Then
\begin{multline}
\label{eqn:DeltaF}
v^2\Delta F =2v^2|\Hess h|^2 +4\langle\nabla h, \nabla v\rangle^2
-4v(\Hess v)(\nabla h, \nabla h)\\-2v\langle\nabla F, \nabla v\rangle
-2v^2\langle\nabla F, \nabla h\rangle
\end{multline}
\end{lemma}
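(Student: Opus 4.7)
The plan is a direct Bochner-type computation starting from the classical Euclidean identity
\begin{equation*}
\Delta F \;=\; 2|\Hess h|^2 + 2\langle\nabla h,\nabla\Delta h\rangle,
\end{equation*}
valid for any smooth $h$. Multiplying by $v^2$, the first term on the right gives exactly $2v^2|\Hess h|^2$, which already appears in the target formula; so the entire task reduces to rewriting $2v^2\langle\nabla h,\nabla\Delta h\rangle$ as the remaining four terms on the right-hand side of \eqref{eqn:DeltaF}. Since $\Delta h$ is only controlled indirectly through equation \eqref{eqn:quasi-linear-degenerate-elliptic}, the natural move is to differentiate that equation rather than solve for $\Delta h$ and then differentiate (which would introduce a $1/v$ singularity).

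Concretely, I rewrite \eqref{eqn:quasi-linear-degenerate-elliptic} as $v\Delta h=-2\langle\nabla v,\nabla h\rangle-vF$ and take its gradient, obtaining
\begin{equation*}
(\Delta h)\nabla v + v\,\nabla\Delta h \;=\; -2\nabla\langle\nabla v,\nabla h\rangle - F\nabla v - v\,\nabla F.
\end{equation*}
Solving for $v\,\nabla\Delta h$ and then pairing with $\nabla h$ and multiplying by $v$, I get $v^2\langle\nabla h,\nabla\Delta h\rangle$ as a sum of four terms. Two of them simplify immediately: $-vF\langle\nabla h,\nabla v\rangle$ and $-v^2\langle\nabla h,\nabla F\rangle$. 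The term $-v(\Delta h)\langle\nabla h,\nabla v\rangle$ is handled by substituting the PDE once more: $v\Delta h=-2\langle\nabla v,\nabla h\rangle-vF$, which produces a clean $+2\langle\nabla v,\nabla h\rangle^2$ together with a $+vF\langle\nabla h,\nabla v\rangle$ that cancels one of the earlier terms.

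The only mildly nontrivial step is the expansion of $\langle\nabla h,\nabla\langle\nabla v,\nabla h\rangle\rangle$. Writing it in coordinates as $\sum_{j}h_j\sum_i(v_{ij}h_i+v_ih_{ij})$ and noting that $\sum_i h_{ij}h_i=\tfrac12 F_j$, one recognizes
\begin{equation*}
\langle\nabla h,\nabla\langle\nabla v,\nabla h\rangle\rangle \;=\; (\Hess v)(\nabla h,\nabla h) \;+\; \tfrac12\langle\nabla v,\nabla F\rangle.
\end{equation*}
Multiplying by $-2v$ and collecting everything yields precisely $-4v(\Hess v)(\nabla h,\nabla h)-2v\langle\nabla v,\nabla F\rangle$, which are the remaining two terms in \eqref{eqn:DeltaF}. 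I do not foresee a genuine obstacle; the only thing to watch is the careful bookkeeping of the cancellation between the $vF\langle\nabla h,\nabla v\rangle$ coming from substituting the PDE into $-v(\Delta h)\langle\nabla h,\nabla v\rangle$ and the $-vF\langle\nabla h,\nabla v\rangle$ that falls out of differentiating $-vF\nabla$-direction in the PDE.
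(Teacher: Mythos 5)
Your proof is correct, and the computation checks out: the coordinate expansion
\begin{equation*}
\langle\nabla h,\nabla\langle\nabla v,\nabla h\rangle\rangle=(\Hess v)(\nabla h,\nabla h)+\tfrac12\langle\nabla v,\nabla F\rangle
\end{equation*}
is right, the cancellation of the two $vF\langle\nabla h,\nabla v\rangle$ terms is real, and multiplying the final line by $2$ and combining with $v^2\Delta F=2v^2|\Hess h|^2+2v^2\langle\nabla h,\nabla\Delta h\rangle$ reproduces \eqref{eqn:DeltaF} exactly.

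The skeleton is the same as the paper's (classical Bochner identity for $F=|\nabla h|^2$, then trade $\nabla\Delta h$ for data in the PDE, then the same expansion of $\langle\nabla h,\nabla\langle\nabla v,\nabla h\rangle\rangle$), but there is one organizational difference worth noting. The paper first divides \eqref{eqn:quasi-linear-degenerate-elliptic} by $v$ to write $\Delta h=-\tfrac{2\langle\nabla v,\nabla h\rangle}{v}-F$ and then takes a gradient; this produces a $1/v^2$ quotient-rule term and forces the paper to first dispose of the degenerate set $\{v=0\}$ by a separate (easy) verification. You instead differentiate the PDE in its polynomial form $v\Delta h=-2\langle\nabla v,\nabla h\rangle-vF$, never divide by $v$, and re-substitute the PDE once to kill the $\Delta h$ factor. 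Every step in your version is an identity between smooth functions valid on all of $B_2$, including $\{v=0\}$, so the case split disappears. This is a modest but genuine simplification; the algebraic content is otherwise identical to the paper's.
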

\begin{proof} 
It follows immediately from~\eref{eqn:quasi-linear-degenerate-elliptic}
that at points where $v=0$ equation~\eref{eqn:DeltaF} is also satisfied.
Therefore we can assume $v\neq 0$.
We differentiate
$$F_{,k} = 2\langle\nabla h, \nabla h_{,k}\rangle\quad\quad
F_{,kk} = 2\langle\nabla h_{,k},\nabla h_{,k}\rangle 
+ 2\langle\nabla h, \nabla h_{,kk}\rangle\ .$$
Hence, 
$$\Delta F = 2\sum_{k, l} |h_{,kl}|^2+2\langle \nabla h, \nabla \Delta h\rangle
=2|\Hess h|^2+2\langle\nabla h, \nabla \Delta h\rangle\ .$$
By equation~\eref{eqn:quasi-linear-degenerate-elliptic}
$$\langle\nabla h, \nabla \Delta h\rangle = -\langle \nabla h, \nabla \left(\frac{2\langle\nabla v, \nabla h\rangle}{v}\right) \rangle
-\langle \nabla h, \nabla F\rangle\ .$$
It remains only to simplify the expression on the right hand side: 
$$\nabla\left(\frac{2\langle\nabla v, \nabla h\rangle}{v}\right)=
-\frac{2\langle\nabla v, \nabla h\rangle}{v^2}\nabla v
+\frac{2\nabla(\langle\nabla v, \nabla h\rangle)}{v}\ ,$$
and an easy computation shows
$$
\langle\nabla h, 2\nabla(\langle\nabla v, \nabla h\rangle)\rangle = 2(\Hess v)(\nabla h, \nabla h)+\langle\nabla F, \nabla v\rangle\ .
$$
\end{proof}

%
%
\section{A non-negative quadratic form}
\label{sec:quadratic}
 Let $v_k(z)=\Im z^k$, where $k$ is a non-negative integer.
  In this section we define a quadratic
form related to $v_k$ and to equation~\eref{eqn:DeltaF}
and show it is non-negative. This will play a key role in
the proof of the Bochner type inequality in Lemma~\ref{lem:bochner}

\begin{definition}
Let $X$ be a smooth vector field on $B_2$. We define
 $$Q_k(X):= (X v_k)^2-v_k(\Hess v_k)(X, X)\ .$$
\end{definition}
\begin{proposition}
\label{prop:nonnegativity}
For all vector fields $X$ on $B_2$ the function 
$Q_k(X)$ is non-negative.
Moreover, we have
$$(Q_k)(X)\geq \frac{1}{k} (Xv_k)^2\ .$$
More precisely, we show
$$Q_k= \frac{1}{k}(dv_k)^2+k(k-1)r^{2k}(d\theta)^2\ .$$
\end{proposition}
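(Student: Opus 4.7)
The cleanest route is to verify the stated identity
$Q_k = \tfrac{1}{k}(dv_k)^2 + k(k-1)r^{2k}(d\theta)^2$
directly in polar coordinates $(r,\theta)$, where $v_k=r^k\sin(k\theta)$ takes its simplest form; once the identity is proved, both non-negativity claims are automatic since each summand on the right is a squared $1$-form, and the lower bound $Q_k(X)\geq \tfrac{1}{k}(Xv_k)^2$ drops out by discarding the manifestly non-negative second term (valid since $k\geq 1$, the cases $k=0$ and $k=1$ being trivial checks).

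First I would compute the components of $\Hess v_k$ in the coordinate basis using the Christoffel symbols of the Euclidean metric $dr^2+r^2\, d\theta^2$, namely $\Gamma^{r}_{\theta\theta}=-r$ and $\Gamma^{\theta}_{r\theta}=\Gamma^{\theta}_{\theta r}=1/r$ (all others vanishing). A short computation using $\partial_r v_k = kr^{k-1}\sin(k\theta)$, $\partial_\theta v_k = kr^k\cos(k\theta)$, and the second partials gives
\[
(\Hess v_k)_{rr}=k(k-1)r^{k-2}\sin(k\theta),\quad (\Hess v_k)_{r\theta}=k(k-1)r^{k-1}\cos(k\theta),
\]
\[
(\Hess v_k)_{\theta\theta}=-k(k-1)r^{k}\sin(k\theta),
\]
which is a useful checkpoint because the trace (contracted with $g^{ij}$) correctly vanishes, reflecting $\Delta v_k=0$.

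Next, for $X=X^r\partial_r+X^\theta\partial_\theta$ I would expand $Q_k(X)=(Xv_k)^2-v_k(\Hess v_k)(X,X)$ and collect the three quadratic pieces. The key cancellations are: the $(X^r)^2$ coefficient reduces from $k^2r^{2k-2}\sin^2(k\theta)$ to $kr^{2k-2}\sin^2(k\theta)$, the cross term reduces from $2k^2r^{2k-1}\sin(k\theta)\cos(k\theta)$ to $2kr^{2k-1}\sin(k\theta)\cos(k\theta)$ (these are precisely the coefficients of $\tfrac{1}{k}(dv_k)^2$), and the $(X^\theta)^2$ coefficient becomes $r^{2k}\bigl[k^2\cos^2(k\theta)+k(k-1)\sin^2(k\theta)\bigr]$. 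Applying $\sin^2+\cos^2=1$ rewrites this last bracket as $kr^{2k}\cos^2(k\theta)+k(k-1)r^{2k}$, which splits off the extra $k(k-1)r^{2k}(d\theta)^2$ term. Matching with the similarly-expanded right-hand side finishes the identity.

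The only subtle step is the Hessian calculation in polar coordinates—forgetting the Christoffel correction $\Gamma^{r}_{\theta\theta}=-r$ in $(\Hess v_k)_{\theta\theta}$ would spoil the trace-free property and leave $Q_k$ with the wrong decomposition. Once the three coefficient matches are established, the positivity statements follow for free: $\tfrac{1}{k}(dv_k)^2$ and $k(k-1)r^{2k}(d\theta)^2$ are both non-negative symmetric $(0,2)$-tensors (they are symmetric squares of real $1$-forms, with non-negative scalar factor since $k\geq 1$), so $Q_k\geq \tfrac{1}{k}(dv_k)^2$ pointwise as quadratic forms, giving both assertions of the proposition.
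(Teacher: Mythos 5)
Your proposal is correct and follows essentially the same route as the paper: compute the Hessian of $v_k$ in polar coordinates via the Levi-Civita connection (your Christoffel symbols are those connection coefficients), then match coefficients of $(X^r)^2$, $X^rX^\theta$, and $(X^\theta)^2$ against the claimed decomposition. The paper organizes this slightly differently by introducing the auxiliary form $\tilde B(X,Y)=\frac{k-1}{k}(Xv_k)(Yv_k)-v_k\Hess(v_k)(X,Y)$, which is exactly $Q_k-\frac1k(dv_k)^2$ polarized, and showing $\tilde B=k(k-1)r^{2k}(d\theta)^2$; that is a cosmetic difference only, and your coefficient arithmetic checks out.
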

\begin{proof}
Let $X, Y$ be two vector fields on $B_2$, and
let 
$$\tilde{B}(X, Y):=\frac{k-1}{k}(Xv_k)(Yv_k)-v_k\Hess(v_k)(X, Y)\ .$$
We compute in polar coordinates:
\begin{align*}
\partial_r v_k &= kr^{k-1}\sin k\theta ,&
\partial^2_{rr} v_k &=k(k-1)r^{k-2}\sin k\theta,\\
\partial_\theta v_k &= kr^k\cos k\theta, &
\partial^2_{\theta\theta} v_k &=-k^2r^{k}\sin k\theta \\
\partial^2_{r\theta} v_k &=k^2r^{k-1}\cos k\theta.
\end{align*}
The Levi-Civita connection is given by 
\begin{align*}
\nabla_{\partial_r}\partial_r &= 0, &
\nabla_{\partial_r}\partial_\theta 
&= \nabla_{\partial_\theta}\partial_r =\frac{\partial_\theta}{r}, &
\nabla_{\partial_\theta}\partial_\theta = -r\partial_r\ .
\end{align*}
%
%
Recall that $(\Hess f)(X, Y)=X(Yf) - (\nabla_X Y) f$. Hence,
\begin{align*}
\Hess(v_k)(\partial_r, \partial_r) &= k(k-1)r^{k-2}\sin k\theta, \\
\Hess(v_k)(\partial_r, \partial_\theta) &=k(k-1)r^{k-1}\cos k\theta, \\
\Hess(v_k)(\partial_\theta, \partial_\theta) &= -k(k-1)r^k\sin k\theta\ .
\end{align*} 
%
%
%
We see that
$$
 \tilde{B}(\partial_r, \partial_r)=\tilde{B}(\partial_r, \partial_\theta)=0,
\quad \tilde{B}(\partial_\theta, \partial_\theta)=k(k-1)r^{2k}\ .
$$
We conclude that
$$Q_k(X)-\frac{1}{k}(Xv_k)^2=\tilde{B}(X, X)=
k(k-1)r^{2k}d\theta(X)^2\geq 0.$$
\end{proof}

\section{An inequality \`a la Bochner}
\label{sec:bochner}
In this section we prove an inequality of Bochner's type.
This will be crucial to prove Theorem~\ref{thm:main}.

Let $v_k(z)=\Im z^k$ and let $h$ satisfy 
equation~\eref{eqn:quasi-linear-degenerate-elliptic} with $v=v_k$.
Let \mbox{$F=|\nabla h|^2$}. 
Then 
\begin{lemma} 
\label{lem:bochner}
$F$ satisfies
$$v_k^2\Delta F +2v_k\langle\nabla F, \nabla v_k\rangle
+2v_k^2\langle\nabla F, \nabla h\rangle \geq v_k^2 \frac{F^2}{k+1}$$
\end{lemma}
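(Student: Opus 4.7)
The plan is to substitute the identity from Lemma~\ref{lem:DeltaF} (with $v=v_k$) directly into the LHS of the Bochner-type inequality. The last two terms of that identity are exactly the first two terms on the LHS of the Bochner inequality we want to prove, with opposite signs, so they cancel and we are reduced to showing
\begin{equation*}
2 v_k^2 |\Hess h|^2 \;+\; 4\langle \nabla h, \nabla v_k\rangle^2 \;-\; 4 v_k (\Hess v_k)(\nabla h, \nabla h) \;\geq\; \frac{v_k^2 F^2}{k+1}.
\end{equation*}
The middle two terms assemble into $4 Q_k(\nabla h)$ by the definition in Section~\ref{sec:quadratic}, so using Proposition~\ref{prop:nonnegativity} we have
$4Q_k(\nabla h)\geq (4/k)\,\langle \nabla h,\nabla v_k\rangle^2$.

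At points where $v_k=0$ every term in the inequality vanishes, so we may work on the open set $\{v_k\neq 0\}$. There, applying the two-dimensional Cauchy--Schwarz bound $|\Hess h|^2 \geq \tfrac12(\Delta h)^2$ (valid because $\Hess h$ is a $2\times 2$ symmetric matrix) and replacing $v_k\Delta h$ by $-2\langle\nabla v_k,\nabla h\rangle - v_k F$ via equation~\eref{eqn:quasi-linear-degenerate-elliptic}, I get
\begin{equation*}
2 v_k^2 |\Hess h|^2 \;\geq\; v_k^2 (\Delta h)^2 \;=\; \bigl(2\langle\nabla v_k,\nabla h\rangle + v_k F\bigr)^2 .
\end{equation*}
Writing $A:=\langle\nabla v_k,\nabla h\rangle$ for brevity, these two lower bounds combine to give
\begin{equation*}
\text{LHS} \;\geq\; (2A + v_k F)^2 + \tfrac{4}{k} A^2
\;=\; \tfrac{4(k+1)}{k} A^2 + 4 A\, v_k F + v_k^2 F^2 .
\end{equation*}

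It remains to verify the elementary inequality
$\tfrac{4(k+1)}{k} A^2 + 4 A v_k F + v_k^2 F^2 \geq \tfrac{v_k^2 F^2}{k+1}$,
i.e.\ that
$\tfrac{4(k+1)}{k} A^2 + 4 A v_k F + \tfrac{k}{k+1} v_k^2 F^2 \geq 0$.
The discriminant of this quadratic in $A$ is $16 v_k^2 F^2 - 4\cdot\tfrac{4(k+1)}{k}\cdot\tfrac{k}{k+1}\,v_k^2 F^2 = 0$, so the expression is a perfect square, namely $\bigl(\tfrac{2\sqrt{k+1}}{\sqrt{k}}\,A + \tfrac{\sqrt{k}}{\sqrt{k+1}}\,v_k F\bigr)^2\geq 0$, finishing the proof.

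The main point in this argument is the coefficient matching: both the $1/k$ in Proposition~\ref{prop:nonnegativity} and the $1/(k+1)$ on the right are sharp, and together they force the quadratic to have vanishing discriminant. I expect that the only real decision, and the only mild obstacle, is seeing that one must combine the two \emph{independent} lower bounds (two-dimensional Cauchy--Schwarz for $|\Hess h|^2$ and Proposition~\ref{prop:nonnegativity} for $Q_k$) in the most ``greedy'' way possible; applying either one alone, or weighting them differently, loses a factor and produces a strict inequality on the discriminant that prevents the perfect-square structure.
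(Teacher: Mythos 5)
Your proof is correct and follows essentially the same route as the paper: substitute Lemma~\ref{lem:DeltaF}, recognize $4Q_k(\nabla h)$ and apply Proposition~\ref{prop:nonnegativity}, use $2|\Hess h|^2\geq(\Delta h)^2$, and finish with an elementary quadratic inequality tied to equation~\eref{eqn:quasi-linear-degenerate-elliptic}. The only cosmetic difference is that you invoke the PDE before the elementary inequality and verify it by a discriminant computation, whereas the paper invokes the PDE after quoting the inequality $a^2+\tfrac{b^2}{k}\geq\tfrac{(a+b)^2}{k+1}$ directly; these are the same computation.
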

\begin{proof}
By Lemma~\ref{lem:DeltaF} and Proposition~\ref{prop:nonnegativity}
\begin{align*}
v_k^2\Delta F &+ 2v_k\langle\nabla F, \nabla v_k\rangle
+2v_k^2\langle\nabla F, \nabla h\rangle
\\&=
2v_k^2|\Hess h|^2+4\langle\nabla h, \nabla v_k\rangle^2-
4v_k\Hess (v_k)(\nabla h, \nabla h) \\ 
&\geq
2v_k^2|\Hess h|^2
+\frac{4}{k}\langle\nabla h, \nabla v_k\rangle^2
\geq v_k^2 (\Delta h)^2 +\frac{4}{k}\langle\nabla h, \nabla v_k\rangle^2
\\ &\geq
\frac{1}{k+1}\left(v_k\Delta h +2\langle \nabla v_k, \nabla h\rangle\right)^2
=v_k^2\frac{F^2}{k+1}\ ,
\end{align*}
where we used that $2|\Hess f|^2\geq (\Delta f)^2$ in $\Rb^2$,
 that $a^2+\frac{b^2}{k}\geq \frac{(a+b)^2}{k+1}$
for real numbers $a, b$ and equation~\eref{eqn:quasi-linear-degenerate-elliptic}.
\end{proof}
\section{Study of $\langle\nabla F, \nabla v_k\rangle/v_k$}
\label{sec:singularity}
We would like to show that 
$\frac{1}{v_k}\langle \nabla F, \nabla v_k\rangle$
extends to a smooth function in $B_2$.
We will apply the following standard division lemma:
\begin{lemma}
\label{lem:division-linear}
Let $l(x, y)=ax+by$. Let $f\in C^{\infty}(B_2)$ be such that
$f(x, y)=0$ whenever $l(x, y)=0$. Then
there exists $q\in C^{\infty}(B_2)$ such that
$f(x, y)=q(x, y)l(x, y)$.
\end{lemma}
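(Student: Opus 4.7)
The plan is to reduce to the case where $l$ is a coordinate function by a linear change of variables, and then apply Hadamard's lemma (the parametric fundamental theorem of calculus).

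First, I would dispose of the trivial case $a = b = 0$: then $l \equiv 0$, the hypothesis forces $f \equiv 0$, and we may take $q = 0$. So assume $(a,b) \neq 0$, and set $c := \sqrt{a^2 + b^2} > 0$. I would choose an orthogonal linear change of coordinates $T:\Rb^2 \to \Rb^2$ with $l \circ T^{-1}(\xi,\eta) = c\xi$, and set $\tilde{f}(\xi,\eta) := f(T^{-1}(\xi,\eta))$. Since $T$ preserves $B_2$, the function $\tilde{f}$ is smooth on $B_2$ and vanishes whenever $\xi = 0$.

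Next, I would apply Hadamard's lemma: define
$$\tilde{q}(\xi,\eta) := \int_0^1 \frac{\bdry \tilde{f}}{\bdry \xi}(t\xi, \eta)\dif{t}.$$
This integral defines a smooth function on $B_2$ (smoothness follows by differentiating under the integral sign, which is justified because the integrand is jointly smooth in $(t,\xi,\eta)$ on a compact interval in $t$). The fundamental theorem of calculus gives
$$\xi\,\tilde{q}(\xi,\eta) = \int_0^1 \frac{d}{dt}\bigl[\tilde{f}(t\xi,\eta)\bigr]\dif{t} = \tilde{f}(\xi,\eta) - \tilde{f}(0,\eta) = \tilde{f}(\xi,\eta),$$
where the last equality uses the hypothesis that $\tilde{f}$ vanishes on $\{\xi = 0\}$.

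Finally, I would transport this identity back via $T$: setting $q(x,y) := \tilde{q}(T(x,y))/c$, one obtains a smooth function on $B_2$ satisfying $q(x,y)\,l(x,y) = \tilde{q}(T(x,y))\,\xi = \tilde{f}(T(x,y)) = f(x,y)$, as required. There is no serious obstacle here; the only subtlety is making sure that the change of coordinates preserves the domain $B_2$, which is guaranteed by taking $T$ to be a rotation.
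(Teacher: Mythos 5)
Your proof is correct and takes essentially the same route as the paper: reduce by an orthogonal change of coordinates to the case where $l$ is a scalar multiple of a coordinate function, then apply Hadamard's lemma via the integral $\int_0^1 \partial_\xi \tilde f(t\xi,\eta)\,dt$. The paper simply says ``we may assume $b=0$, $a\neq 0$'' and writes down the same formula, leaving the rotation and the degenerate case implicit; you have just spelled out those details (and correctly noted that a rotation preserves $B_2$, which is what makes the reduction harmless).
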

\begin{proof}
We can assume without loss of generality that $b=0$ and $a\neq 0$.
Define
$$q(x, y)=\frac{1}{a}\int_0^1 (\partial_1 f)(tx, y)\,dt\ .$$
\end{proof}
\begin{lemma}
\label{lem:division-by-vk}
Let $f\in C^{\infty}(B_2)$ be such that $f=0$ whenever $v_k=0$.
Then, there exists $q\in C^{\infty}(B_2)$ such that $f=qv_k$.
\end{lemma}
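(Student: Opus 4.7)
The plan is to reduce the statement to an iterated application of the linear division Lemma~\ref{lem:division-linear} after factoring $v_k$ into real linear forms. First I would observe that since $v_k(re^{\mi\theta}) = r^k\sin(k\theta)$ and, by a standard trigonometric identity,
$$\sin(k\theta) = 2^{k-1}\prod_{m=0}^{k-1}\sin\!\left(\theta-\tfrac{m\pi}{k}\right),$$
multiplying through by $r^k$ and using $r\sin(\theta-m\pi/k)=\cos(m\pi/k)\,y-\sin(m\pi/k)\,x=:l_m(x,y)$ yields the factorization
$$v_k = 2^{k-1}\,l_0\,l_1\cdots l_{k-1},$$
where the $l_m$ are pairwise non-proportional real linear forms whose zero loci are the $k$ distinct lines through the origin comprising $\{v_k=0\}$.

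Next I would argue by induction on $j=0,1,\dots,k-1$ that $f$ admits a factorization $f=q_j\,l_0\,l_1\cdots l_j$ with $q_j\in C^\infty(B_2)$. The base case $j=0$ follows at once from Lemma~\ref{lem:division-linear}, applied to $l_0$, since $f$ vanishes on $\{l_0=0\}\subset\{v_k=0\}$. For the inductive step, assuming $f=q_j\,l_0\cdots l_j$, I need to check that $q_j$ vanishes on the line $\{l_{j+1}=0\}$; once this is known, another application of Lemma~\ref{lem:division-linear} produces $q_{j+1}\in C^\infty(B_2)$ with $q_j=q_{j+1}\,l_{j+1}$, completing the step. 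Taking $j=k-1$ and setting $q:=q_{k-1}/2^{k-1}$ gives $f=q\,v_k$ as required.

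The remaining point is the vanishing of $q_j$ on $\{l_{j+1}=0\}$. Away from the origin the lines $\{l_0=0\},\dots,\{l_j=0\}$ all miss $\{l_{j+1}=0\}\setminus\{0\}$ (distinct lines through the origin intersect only at the origin), so on $\{l_{j+1}=0\}\setminus\{0\}$ the product $l_0\cdots l_j$ is nonzero, and the identity $q_j = f/(l_0\cdots l_j)$ forces $q_j=0$ there because $f=0$ on $\{v_k=0\}\supset\{l_{j+1}=0\}$. At the origin itself the vanishing $q_j(0)=0$ then follows by continuity of the smooth function $q_j$. Thus $q_j$ vanishes identically on the line $\{l_{j+1}=0\}$, which is the hypothesis needed for Lemma~\ref{lem:division-linear}.

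The only mildly delicate step is this vanishing-on-a-line argument, and in particular handling the origin, where all $k$ lines meet and the factorization is degenerate. This is exactly where the smoothness of $q_j$ — produced by Lemma~\ref{lem:division-linear} at the previous stage — is used: it upgrades pointwise vanishing on an open dense subset of a line to vanishing on the entire line. Everything else is routine bookkeeping.
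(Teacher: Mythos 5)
Your proof is correct and follows the same route as the paper: factor $v_k$ into $k$ distinct real linear forms and apply Lemma~\ref{lem:division-linear} inductively, with the key step being that each successive quotient $q_j$ vanishes on the next line $\{l_{j+1}=0\}$ (away from the origin by direct division, at the origin by continuity) — the paper merely states ``the lemma follows by induction'' and leaves this step implicit. One small slip: the product identity should read $\sin(k\theta)=2^{k-1}\prod_{m=0}^{k-1}\sin\bigl(\theta+\tfrac{m\pi}{k}\bigr)$ (with a plus sign, or else the constant $2^{k-1}$ picks up a sign, as you can check for $k=2$), but this is immaterial since the argument only uses that the constant is nonzero.
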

\begin{proof}
$v_k$ can be expressed as a product of $k$ linear factors.
In fact
$$v_k(x, y)=a_k\prod_{l=0}^{k-1} 
\left(y\cos \frac{l\pi}{k}-x\sin \frac{l\pi}{k}\right)$$
%
for some $a_k\in\Rb$. Hence, the lemma follows by induction from Lemma~\ref{lem:division-linear}. 
\end{proof}
\begin{lemma}
\label{lem:nablav-nablaF}
There exists a smooth function $G_k$ such that 
$$\langle\nabla F, \nabla v_k\rangle =G_k v_k\ .$$
\end{lemma}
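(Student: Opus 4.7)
\medskip

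\noindent\textbf{Proof plan for Lemma~\ref{lem:nablav-nablaF}.}
The plan is to reduce the statement, via an algebraic manipulation of the PDE~\eref{eqn:quasi-linear-degenerate-elliptic}, to the single assertion that $(\Hess v_k)(\nabla h,\nabla h)$ is divisible by $v_k$ as a smooth function, and then to invoke Lemma~\ref{lem:division-by-vk}.

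First I would rewrite equation~\eref{eqn:quasi-linear-degenerate-elliptic} in the form
$$\langle\nabla v_k,\nabla h\rangle = v_k\, H,\qquad H := -\tfrac{1}{2}(\Delta h + F),$$
where $H$ is smooth (since by the overall setup of the proof $h=\log f$ is smooth). Differentiating this identity in the direction $e_i$ gives
$$(\Hess v_k)(\nabla h,e_i) + (\Hess h)(\nabla v_k,e_i) = v_{k,i}H + v_k H_{,i},$$
and after contracting with $\nabla h$ and using $\sum_i h_{,i}(\Hess h)(\nabla v_k,e_i)=\tfrac{1}{2}\langle\nabla v_k,\nabla F\rangle$ I obtain the key identity
$$\langle\nabla F,\nabla v_k\rangle = 2v_k H^2 + 2v_k\langle\nabla h,\nabla H\rangle - 2(\Hess v_k)(\nabla h,\nabla h).$$
The first two terms on the right are manifestly of the form $v_k\cdot(\text{smooth})$, so everything reduces to handling the last term.

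Next I would verify that $(\Hess v_k)(\nabla h,\nabla h)$ vanishes pointwise on $Z=\{v_k=0\}$. By the factorization used in the proof of Lemma~\ref{lem:division-by-vk}, $Z$ is the union of $k$ lines $L_0,\dots,L_{k-1}$ through the origin. At any point $p\in L_l\setminus\{0\}$, the gradient $\nabla v_k(p)$ is nonzero and normal to $L_l$, while equation~\eref{eqn:quasi-linear-degenerate-elliptic} evaluated at $p$ gives $\langle\nabla v_k(p),\nabla h(p)\rangle=0$; hence $\nabla h(p)$ is tangent to $L_l$. Because $v_k\equiv 0$ on $L_l$, the second derivative of $v_k$ along any constant vector field tangent to $L_l$ vanishes, i.e.\ $(\Hess v_k)(T,T)=0$ whenever $T$ is tangent to $L_l$; taking $T=\nabla h(p)$ yields the desired vanishing off the origin. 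At $p=0$ the same conclusion follows from continuity, since $(\Hess v_k)(\nabla h,\nabla h)$ is a continuous function vanishing on a punctured neighborhood of $0$ within $Z$.

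Finally, Lemma~\ref{lem:division-by-vk} applied to the smooth function $(\Hess v_k)(\nabla h,\nabla h)$ produces a smooth $q$ with $(\Hess v_k)(\nabla h,\nabla h)=q\,v_k$, so the displayed identity becomes $\langle\nabla F,\nabla v_k\rangle = v_k\bigl(2H^2+2\langle\nabla h,\nabla H\rangle - 2q\bigr)$, giving the required smooth $G_k$. The main obstacle is the geometric step: checking that $\nabla h$ is tangent to each line of $Z$ and exploiting that $\Hess v_k$ annihilates tangential directions there; once that is cleanly in place the rest is algebra and an appeal to the division lemma.
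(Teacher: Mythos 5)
Your proposal is correct, and it takes a genuinely different (and arguably cleaner) route from the paper's. Both proofs reduce to Lemma~\ref{lem:division-by-vk} and therefore to showing something vanishes on $Z=\{v_k=0\}$, but they choose different things to check. The paper works directly with $\langle\nabla F,\nabla v_k\rangle$ in polar coordinates: on each line $\{v_k=0\}$, it shows $v_{k,r}=0$ so only the $\theta$-component survives, then from the PDE deduces $h_{,\theta}=0$ along the line, hence $h_{,r\theta}=0$, and finally $F_{,\theta}=0$, so the pairing vanishes. You instead rewrite the PDE as $\langle\nabla v_k,\nabla h\rangle=v_kH$, differentiate and contract with $\nabla h$ to get the identity $\langle\nabla F,\nabla v_k\rangle=2v_kH^2+2v_k\langle\nabla h,\nabla H\rangle-2(\Hess v_k)(\nabla h,\nabla h)$, which isolates $(\Hess v_k)(\nabla h,\nabla h)$ as the only term not manifestly a multiple of $v_k$; the tangency argument (on each line, $\nabla h\perp\nabla v_k$ is tangent, and the second tangential derivative of a function vanishing identically on the line vanishes) then finishes it. The identity derivation checks out, and the tangency observation is sound (your $H$ is smooth since $h=\log f$ is smooth, and at $p\in Z\setminus\{0\}$ one has $\nabla v_k(p)\neq 0$, so $\nabla h(p)$ genuinely lies in the tangent line). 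Your version is coordinate-free and makes the mechanism more transparent: the only obstruction lives in $\Hess v_k$ and is killed by tangency; the paper's polar computation is more elementary but more opaque about why it works.
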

\begin{proof}
By Lemma~\ref{lem:division-by-vk}
it is enough to show that $\langle\nabla F, \nabla v_k\rangle$
vanishes whenever $v_k$ does. This will follow from 
equation~\eref{eqn:quasi-linear-degenerate-elliptic}. Indeed,
let $p\in B_2$, $p\neq 0$ be such that $v_k(p)=0$.
Then
$$\langle \nabla F, \nabla v_k\rangle(p)=F_{,r}(p)(v_k)_{,r}(p)+
\frac{1}{|p|^2}F_{,\theta}(p)v_{k,\theta}(p)=
\frac{1}{|p|^2}F_{,\theta}(p)v_{k,\theta}(p)\ .$$
We will show that $F_{,\theta}(p)=0$.
$$ F=h_{,r}^2+\frac{h_{,\theta}^2}{r^2}\ .$$
So, $$F_{,\theta}(p)=2h_{,r}(p)h_{,r\theta}(p) +\frac{2}{|p|^2}
h_{,\theta}(p)h_{,\theta\theta}(p)\ .$$
By equation~\eref{eqn:quasi-linear-degenerate-elliptic},
$$0=\langle\nabla h, \nabla v_k\rangle(p)
=h_{,r}(p)v_{k,r}(p)+\frac{1}{|p|^2}h_{,\theta}(p)v_{k,\theta}(p)
=\frac{1}{|p|^2}h_{,\theta}(p)v_{k,\theta}(p)\ .$$
Since $v_{k,\theta}(p)\neq 0$ we conclude that $h_{,\theta}(p)=0$.
Since, $h_{,\theta}(p)=0$ on the line passing through $0$ and $p$,
we also see that $h_{,r\theta}(p)=0$.
\end{proof}
The next lemma shows that the expression
$\langle\nabla F, \nabla v_k \rangle/v_k$ has the role of a second
derivative of $F$ on $v_k=0$. This will be important in the proof
of Theorem~\ref{thm:main}. 
\begin{lemma}
\label{lem:second-derivative}
Let $f, g\in C^{\infty}(B_2)$ be such that
$$\langle\nabla f, \nabla v_k \rangle = g v_k \ .$$
Let $p\in B_2$  be a local maximum point of $f$.
Then $g(p)\leq 0$. 
\end{lemma}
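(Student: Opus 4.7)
The plan is to analyze the hypothesis $\langle\nabla f,\nabla v_k\rangle=gv_k$ locally at $p$, using that $p$ is an interior local maximum of $f$, so $\nabla f(p)=0$ and $\Hess f(p)$ is negative semi-definite. I split into three cases according to whether $v_k(p)$ and $\nabla v_k(p)$ vanish.

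If $v_k(p)\neq 0$, the hypothesis at $p$ gives $g(p)v_k(p)=\langle\nabla f(p),\nabla v_k(p)\rangle=0$, hence $g(p)=0$. The main case is $v_k(p)=0$ with $\nabla v_k(p)\neq 0$, which is automatic when $p\neq 0$. Set $n=\nabla v_k(p)/|\nabla v_k(p)|$ and Taylor-expand both sides of the equation along the line $\gamma(t)=p+tn$. Because $\nabla f(p)=0$ and $v_k(p)=0$, each side vanishes linearly: the left side expands as $t|\nabla v_k(p)|\,\Hess f(p)(n,n)+O(t^2)$, while the right side expands as $t|\nabla v_k(p)|\,g(p)+O(t^2)$. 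Matching the coefficients of $t$ gives $g(p)=\Hess f(p)(n,n)\leq 0$. This is the conceptual heart of the lemma.

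The remaining case is $p=0$ with $k\geq 2$, where $\nabla v_k$ also vanishes at $p$. Here I use the degree-$k$ homogeneity of $v_k$, which gives $v_k(tn)=t^kv_k(n)$ and $\nabla v_k(tn)=t^{k-1}\nabla v_k(n)$, together with $\nabla f(0)=0$, to expand both sides along the ray $\gamma(t)=tn$ to leading order, obtaining the identity
$$g(0)\,v_k(n)=\bigl\langle\Hess f(0)\,n,\nabla v_k(n)\bigr\rangle\quad\text{for every unit vector }n.$$
Writing both sides as trigonometric polynomials in the polar angle and matching Fourier coefficients forces the off-diagonal entry of $\Hess f(0)$ to vanish (and, when $k\geq 3$, the two diagonal entries to coincide), and identifies $g(0)$ as a positive multiple of $\mathrm{tr}\,\Hess f(0)$, which is $\leq 0$ by negative semi-definiteness.

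The first two cases are direct applications of Taylor's theorem once one expands along the normal to $\{v_k=0\}$; I expect the third case, which relies on the explicit Fourier matching against $\sin(k\theta)$, to be the only mildly subtle step.
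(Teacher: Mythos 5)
Your proof is correct; I checked all three cases. The decomposition into cases (based on whether $v_k(p)=0$ and whether $p=0$) mirrors the paper's, and both arguments ultimately rest on $\nabla f(p)=0$ and $\Hess f(p)\preceq 0$, but the implementation is genuinely different. The paper works in polar coordinates, writes the explicit formula
$g=k\frac{f_r}{r}+k\frac{f_\theta\cos k\theta}{r^2\sin k\theta}$,
and evaluates the indeterminate limit at $p$ via L'H\^opital's rule, landing on $f_{\theta\theta}(p)/r_p^2\le 0$ (or on a one-sided radial difference quotient at the origin). Your version replaces this with a Cartesian Taylor expansion of both sides of $\langle\nabla f,\nabla v_k\rangle=gv_k$ along the line normal to $\{v_k=0\}$ and matches the leading coefficient, which identifies $g(p)$ intrinsically as $\Hess f(p)(n,n)$ with $n$ the unit normal; at the origin you exploit homogeneity of $v_k$ and Fourier matching against $\sin(k-2)\theta$, $\cos(k-2)\theta$, $\sin k\theta$ to get $g(0)$ as a positive multiple of $\operatorname{tr}\Hess f(0)$. (For $p\neq 0$, your $\Hess f(p)(n,n)$ and the paper's $f_{\theta\theta}(p)/r_p^2$ are the same number, since $n$ points in the angular direction.) Your route is slightly more conceptual and coordinate-free away from the origin, and it produces the extra structural information that $\Hess f(0)$ must be a scalar matrix when $k\ge 3$, which the paper's computation does not surface; the paper's route is shorter and stays entirely within elementary one-variable calculus. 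One small remark worth adding for completeness: your Case 3 argument implicitly uses that $\sin k\theta$, $\sin(k-2)\theta$, $\cos(k-2)\theta$ are linearly independent when $k\ge 3$, and for $k=2$ reduces to $\sin 0\equiv 0$, $\cos 0\equiv 1$, which is exactly why the diagonal-matching constraint disappears there.
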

\begin{proof}
$$\langle\nabla f, \nabla v_k\rangle =
kr^{k-2}(rf_r \sin k\theta +f_{\theta}\cos k\theta)\ .$$
$$g(p)=\lim_{(r, \theta)\to(r_p, \theta_p)} k\frac{f_r}{r}
+k\frac{f_\theta\cos k\theta}{r^2\sin k\theta}\ .$$

Assume first that $p\neq 0$, then
$$\lim_{q\to p} k\frac{f_r(q)}{r_q} = 0$$
since $p$ is a critical point.
If $\sin k\theta_p\neq 0$ then
$$\lim_{q\to p} k\frac{f_\theta(q)\cos k\theta_q}{r_q^2\sin k\theta_q} = 0\ ,$$
since $p$ is a critical point.
Otherwise, by L'H\^opital's rule we have
$$\lim_{q\to p} k \frac{f_{\theta}(q)\cos k\theta_q}{r_q^2\sin k\theta_q }
=\lim_{\theta\to\theta_p} 
k\frac{f_{\theta\theta}(r_p, \theta)\cos k\theta-kf_{\theta}(q)
\sin k\theta}
{kr_p^2\cos k\theta}=\frac{f_{\theta\theta}(p)}{r_p^2}\leq 0\ ,$$
since $p$ is a maximum point.

If $p=0$, let $\theta_0 =\frac{\pi}{2k}$, 
then 
$$g(0)=\lim_{r\to 0^+} k\frac{f_r(r, \theta_0)}{r} \leq 0\ ,$$
since $0$ is a maximum point.
\end{proof}
\section{Proof of the Theorem~\ref{thm:main} in normal form}
\label{sec:proof-normal-form}
We prove Theorem~\ref{thm:main} for the case $Z=Z_k=\{\Im z^k=0\}\cap B_2$.
We first need to construct a suitable cutoff function.
Let $\chi\in C_c^{\infty}([0, 2))$ be such that
$\chi\equiv 1$ on $[0, 1]$ and $\chi$ is non-increasing.
Let $\vphi(x, y):=\chi(x^2+y^2)$.
Observe that 
\begin{equation}
\label{eqn:nablav-nablaphi}
\frac{\langle\nabla \vphi, \nabla v_k\rangle}{v_k} = 
2k\chi'(r^2)\ .
\end{equation}
Let $A>0$ be such that
\begin{equation}
\label{eqn:cutoff-bounds}
\chi'\geq -A ,\quad \Delta\vphi\geq -A, \quad |\nabla\vphi|^2\leq A\vphi.
\end{equation}
%
%

Without loss of generality we can assume $v=v_k$.
Let $u\in\Fl(Z_k)$.
\vspace{1ex}

\noindent\textbf{Existence of a positive quotient.}
We first show that $|u/v_k|$ defines a positive smooth function in $B_2$.
By Lemma~\ref{lem:division-by-vk} there exists
$f\in C^{\infty}(B_2)$ such that $u=fv_k$. It is clear
that $f\neq 0$ on $B_2\setminus Z_k$. Let $p\in Z_k$, $p\neq 0$.
If $f(p)=0$ then $u(p)=0$ and $(\nabla u)(p)=0$. 
Hence, $u$ has a zero
of order $d\geq 2$ at $p$, but 
since $u$ is harmonic in a small ball $B_{\eps}(p)$ 
centered at $p$ this implies that $Z\cap B_\eps(p)$
is homeomorphic to the zero set of $\Im z^d$. This is a contradiction.
If $p=0$ and $f(p)=0$ then $u(p)$ has a zero of order $d\geq k+1$ at $0$
with $Z_k$ as a zero set which is impossible since $u$ is harmonic.
\vspace{1ex}

\noindent\textbf{A gradient estimate.}
We now proceed to proving the gradient estimate on $\log f$.
Let $h=\log f$. 
$h\in C^{\infty}(B_2)$ and $u=e^h v_k$.
We let $F=|\nabla h|^2$. At points $q\in B_2$ where $v_k(q)\neq 0$
and $\vphi(q)\neq 0$ we have by Lemma~\ref{lem:bochner}
\begin{align}
\nonumber
\Delta(\vphi F) &=
(\Delta\vphi)F
+2\langle\nabla\vphi,\nabla F\rangle
+\vphi\Delta F\\
\nonumber
&\geq
(\Delta\vphi)F+2\langle\nabla\vphi,\nabla F\rangle
+\frac{\vphi F^2}{k+1}
-2\vphi\frac{\langle\nabla F, \nabla v_k\rangle}{v_k}
-2\vphi\langle\nabla F,\nabla h\rangle\\
\nonumber
&=(\Delta\vphi) F+\frac{2}{\vphi}\langle\nabla\vphi,\nabla(\vphi F)\rangle
-2F\frac{|\nabla \vphi|^2}{\vphi} +\frac{\vphi F^2}{k+1}
-2\frac{\langle\nabla (\vphi F), \nabla v_k\rangle}{v_k} \\
\nonumber
&+2F\frac{\langle\nabla \vphi, \nabla v_k\rangle}{v_k}
-2\langle\nabla (\vphi F), \nabla h\rangle
+2F\langle\nabla \vphi, \nabla h\rangle\\
\nonumber
&\geq 
(\Delta\vphi) F+\frac{2}{\vphi}\langle\nabla\vphi,\nabla(\vphi F)\rangle
-2F\frac{|\nabla \vphi|^2}{\vphi} +\frac{\vphi F^2}{k+1}
-2\frac{\langle\nabla (\vphi F), \nabla v_k\rangle}{v_k}\\
\label{eqn:bochner-phiF}
&+2F\frac{\langle\nabla \vphi, \nabla v_k\rangle}{v_k}
-2\langle\nabla (\vphi F), \nabla h\rangle
-2F^{3/2}|\nabla\vphi|
\end{align}
The last inequality follows from the Cauchy-Schwartz inequality.

Let $p\in B_2$ be a point where $\vphi F$ attains its maximum.
Observe that by Lemma~\ref{lem:nablav-nablaF} and~\eref{eqn:nablav-nablaphi}
there exists $G\in C_c^{\infty}(B_2)$ such that
\begin{equation}
\label{eqn:nablav-nablaphiF}
\langle\nabla (\vphi F), \nabla v_k\rangle=G v_k \ .
\end{equation}
It follows 
from~\eref{eqn:bochner-phiF},~\eref{eqn:nablav-nablaphiF}
and Lemma~\ref{lem:second-derivative} that at the point $p$ the following
inequality is satisfied 
$$0\geq (\Delta\vphi) F-2F \frac{|\nabla\vphi|^2}{\vphi}
+\frac{\vphi F^2}{k+1}
+4kF\chi'(|p|^2)-2F(\vphi F)^{1/2}\frac{|\nabla \vphi|}{\vphi^{1/2}}\ .
$$
Dividing by $F(p)$ and using~\eref{eqn:cutoff-bounds} we get
the following quadratic inequality in $(\vphi F)^{1/2}$:
$$0\geq \frac{\vphi F}{k+1}-2\sqrt{A}(\vphi F)^{1/2}-(4k+3)A\ ,$$
from which we conclude that $(\vphi F)^{1/2}(p)\leq 4(k+1)\sqrt{A}$.
Since $p$ is a maximum point, the same inequality is true for all
$q\in B_2$. In particular, in $B_1$ we get
$$|\nabla h|\leq 4(k+1)\sqrt{A}\ .$$
\qed
%
%

\section{Proof of Theorem~\ref{thm:main} - the general case}
\label{sec:proof}
In this case we reduce the general case to the $Z=Z_k$ proved in Section~\ref{sec:proof-normal-form}.
\begin{proof}
\textbf{Existence of a positive quotient.} Fix $v\in\Fl(Z)$.
Let $u\in\Fl(Z)$ be arbitrary.
We will first show that $|u/v|$ extends to a positive smooth
function in $B_2$. Let $p\in Z$. There exist $k(p)\in\Nb$, 
a neighborhood $N_p\ni p$, an injective conformal map
$\alpha_p:N_p\to B_1$ such that
$\alpha_p(p)=0$ 
and $v\circ\alpha_p^{-1}(w)=\Im w^{k(p)}$ for
all $w\in\alpha_p(N_p)$.
Let $N_p'$ be a neighborhood of $p$ such that
$\overline{N_p'}\subset N_p$. 
Let $W=\alpha_p(N_p)$ and $W'=\alpha_p(N_p')$.
$v\circ\alpha_p^{-1}$ and $u\circ\alpha_p^{-1}$ are harmonic functions
 both vanish  exactly on $Z_{k(p)}\cap W$.
By Section~\ref{sec:proof-normal-form} we know that $|(u\circ\alpha_p^{-1})/
(v\circ\alpha_p^{-1})|$
defines a positive smooth function $f_0$ on $W$. 
Let $f(z):= f_0(\alpha_p(z))$ be defined in $N_p$. 
Then $|f|>0$ and $u=fv$ in $N_p$. This shows that $|u/v|$ extends
to a smooth positive function in $N_p$. 
Since $p$ is arbitrary we conclude that $|u/v|$ extends to a smooth
positive function in~$B_2$.
\vspace{1ex}

\noindent\textbf{A bound on $\nabla\log|u/v|$.}
Let $p\in Z\cap \overline{B}_1$. 
Let $\alpha_p, k(p), N_p, N_p'$ be defined as above.
Let us write $u=e^{h}v$ where $h\in C^{\infty}(B_2)$.
By Section~\ref{sec:proof-normal-form} we know that 
$|\nabla (h\circ\alpha_p^{-1})|\leq C_p$ in $W'$.
By the chain rule it follows that
$|\nabla h|\leq C_p|\alpha_p'|$ in~$N_p'$. 
Since we can cover $Z\cap\overline{B}_1$ by a finite number of open
sets of the form $N_p'$ we get that 
\begin{equation}
\label{eqn:bound-nz}
|\ h|\leq C_v \mbox{ in a neighborhood } N_Z\mbox{ of } Z\cap \overline{B}_1.
\end{equation}
Since
$u, v$ do not vanish in a neighborhood of $\overline{B}_1\setminus N_Z$,  by~\cite{li-yau-86} (or by the case $Z=\emptyset$ in section~\ref{sec:proof-normal-form}) we know that 
\begin{equation}
\label{eqn:bound-minus-nz}
|\nabla h|\leq C \mbox{ in a neighborhood of } \overline{B}_1\setminus N_Z\ .
\end{equation}
From inequalities~\eref{eqn:bound-nz} and~\eref{eqn:bound-minus-nz}
we get that $|\nabla h|\leq C_Z$ in $\overline{B}_1$.
\end{proof}

\section{Examples}
\label{sec:examples}
\subsection{Harmonic functions sharing the same zeros}
We give a few examples of harmonic
functions with common zeros.
\begin{enumerate}
\item (Due to Benji Weiss) Let $\alpha\in[-\pi/2, \pi/2]$.
Let $u_{\alpha}(x, y)=e^{\alpha x}\sin|\alpha| y$.
 The zero set of $u_\alpha$ in $B_2$ is the
$x$-axis.

\item Let $F:B_2\to\Cb$ be holomorphic. Suppose $|F|<\pi$.
 The zero set of
$\Im e^F$ is the same as the zero set of $\Im F$. 

\item Let $\alpha\in\Rb$ be such that $0<|\alpha|\leq 1$.
Let $f(z)$ be the branch of $z^{\alpha}$ in $\Cb\setminus(-\infty, 0]$
which admits positive values on the positive real axis.
Define $u(z)=\Im f(z+2)$.
The zero set of $u$ in $B_2$ is the $x$-axis.
%
%

\item Let $F:B_2\to B_2$ be holomorphic.
Let $u=\Im \frac{aF}{cF+d}$ where $a, c, d\in\Rb$ are such that
 $F\neq -d/c$ in 
$B_2$. Then, the zero set of $u$ in $B_2$ coincides with the
zero set of $\Im F$.

\item 
Let $k\in\Nb$ and  let $S_k=\{0<\arg z<\pi/k\}\cap B_2$.
Let $u$ be a positive harmonic function in $S_k$, which is continuous on
$\partial S_k$ and vanishes on $\partial S_k\cap B_2$. One
can extend $u$ by reflections to a harmonic function in $B_2$.
The zero set of $u$ coincides with the zero set of $\Im z^k$.

\item (Due to Charlie Fefferman) Let $u(x, y)=xy$. Let $v(x, y)=x^3y-xy^3$.
Then $u$ and  $u+\eps v$ have the same zero set in $B_2$ 
if $\eps>0$ is sufficiently small.
\end{enumerate}
\subsection{An example clarifying the relation to the BHP}
\label{sec:kenig}
(Due to Carlos Kenig) Let $t>1$ and let $S=\{0<\arg z<2\pi/t\}\subset \Cb$.
Let $v=\Im z^{t/2}$. Observe that $v$ is positive in $S$ and 
$v|_{\partial S}=0$.
Let $p\in S$ be such that $|p|>2$. Let $G_p$ be the Green function of $S$ with singularity at $p$.
We consider $G_p/v$ in $S\cap B_2$. Unless $t$ is an integer, $G_p/v$ cannot
be extended as a $C^1$-function in a neighborhood of $0$.
Moreover, we note that 
$|\nabla \log (G_p/v)|$ is bounded in $S\cap B_1$ if and only if
$t\geq 2$.

A little simpler, we let
 $u=\Im z^{t/2}+\eps\Im z^t$ for small $\eps>0$.
 Then $\log u/v$ has no bounded gradient in $S\cap B_1$ unless
 $t\geq 2$.

These families of examples (for $1<t< 2$) show that the BHP alone
is not enough to obtain gradient estimates even if the boundary of the domain is nice (straight lines).

\section{A second proof of the normal form case 
using Poisson's formula}
\label{sec:sodin}
This section is due to Misha Sodin.
Let 
$$S_k=\{z\in\Cb |\, |z|<1,  0<\arg z<\pi/k\}\ .$$
Let $u$ be a positive harmonic function in $S_k$, continuous
on $\overline{S_k}$ such that $u=0$ on $\partial S_k\setminus \{|z|=1\}$.
We have the following integral representation for~$u$:

\begin{align*}
u(re^{i\theta}) &= \frac{k(1-r^{2k})}{2\pi}
\int_0^{\pi/k} \left(\frac{1}{|e^{ik\vphi}-r^ke^{ik\theta}|^2}
-\frac{1}{|e^{ik\vphi}-r^ke^{-ik\theta}|^2}\right)u(e^{i\vphi})\,d\vphi\\
&=
 \frac{2kr^{k}(1-r^{2k})\sin(k\theta)}{\pi}\int_0^{\pi/k}
\frac{\sin (k\vphi) u(e^{i\vphi})}
{|e^{ik\vphi}-r^ke^{ik\theta}|^2|e^{ik\vphi}-r^ke^{-ik\theta}|^2}\,d\vphi
\end{align*}

Hence

$$\frac{u(re^{i\theta})}{r^k\sin k\theta}
=\frac{2k(1-r^{2k})}{\pi}\int_{0}^{\pi/k}
\frac{\sin(k\vphi)u(e^{i\vphi})}{{|e^{ik\vphi}-r^ke^{ik\theta}|^2|e^{ik\vphi}-r^ke^{-ik\theta}|^2}} 
\,d\vphi$$

and
\begin{equation}
\label{eqn:formula-logu}
\log\left(\frac{u(re^{i\theta})}{r^k\sin k\theta}\right)=\log \frac{2k}{\pi}
+\log(1-r^{2k})+
\log g(z)
\end{equation}
where
$$g(re^{i\theta})=\int_{0}^{\pi/k} K(re^{i\theta}, e^{i\vphi})\sin(k\vphi) u(e^{i\vphi})\,d\vphi\ ,$$
with $$K(re^{i\theta},e^{i\vphi})=\frac{1}{|e^{ik\vphi}-r^ke^{ik\theta}|^2|e^{ik\vphi}-r^ke^{-ik\theta}|^2}$$

Now, $$\min \{K(z, \zeta) \big|\, z\in S_k, |z|<1/2, |\zeta|=1, 0\leq \arg\zeta\leq \pi/k\}$$ is a positive number $C_1$,
and
$$\max\{|\nabla_z K(z, \zeta)|\,\big| \, z\in S_k, |z|<1/2, |\zeta|=1, 0\leq\arg\zeta\leq\pi/k\}<C_2<\infty\ .$$  
Consequently, for $|z|<1/2$,
$$g(z)\geq C_1\int_{0}^{\pi/k}u(e^{i\vphi})\sin(k\vphi)\,d\vphi,$$
and $$|\nabla g(z)|\leq C_2\int_0^{\pi/k}u(e^{i\vphi})\sin(k\vphi)\, d\vphi\ .$$

So we get $|\nabla\log g|\leq C_2/C_1$ in $S_k\cap B_{1/2}$,
and then from~\eref{eqn:formula-logu}
$$|\nabla\log\frac{u}{r^k\sin k\theta}|\leq Ck\ $$
in $S_k\cap B_{1/2}$.
Finally we use reflection to get the stated result in the unit ball.


\begin{bibdiv}
\begin{biblist}
\bib{ancona-78}{article}{
  author={Ancona, Alano},
  title={Principe de Harnack \`a la fronti\`ere et th\'eor\`eme de Fatou pour un op\'erateur elliptique dans un domaine lipschitzien},
  language={French, with English summary},
  journal={Ann. Inst. Fourier (Grenoble)},
  volume={28},
  date={1978},
  number={4},
}

\bib{bal-vol96}{article}{
  author={Balogh, Z.},
  author={Volberg, A.},
  title={Boundary Harnack principle for separated semihyperbolic repellers, harmonic measure applications},
  journal={Rev. Mat. Iberoamericana},
  volume={12},
  date={1996},
  number={2},
  pages={299--336},
}

\bib{caffarelli-fabes-mortola-salsa-81}{article}{
  author={Caffarelli, L.},
  author={Fabes, E.},
  author={Mortola, S.},
  author={Salsa, S.},
  title={Boundary behavior of nonnegative solutions of elliptic operators in divergence form},
  journal={Indiana Univ. Math. J.},
  volume={30},
  date={1981},
  number={4},
  pages={621--640},
}

\bib{jer-ken82}{article}{
  author={Jerison, David S.},
  author={Kenig, Carlos E.},
  title={Boundary behavior of harmonic functions in nontangentially accessible domains},
  journal={Adv. in Math.},
  volume={46},
  date={1982},
  number={1},
  pages={80--147},
}

\bib{li-yau-86}{article}{
  author={Li, Peter},
  author={Yau, Shing-Tung},
  title={On the parabolic kernel of the Schr\"odinger operator},
  journal={Acta Math.},
  volume={156},
  date={1986},
  number={3-4},
  pages={153--201},
  issn={0001-5962},
}

\bib{nad99}{article}{
  author={Nadirashvili, Nikolai S.},
  title={Harmonic functions with bounded number of nodal domains},
  journal={Appl. Anal.},
  volume={71},
  date={1999},
  number={1-4},
  pages={187--196},
}

\bib{pop-vol98}{article}{
  author={Popovici, I.},
  author={Volberg, A.},
  title={Boundary Harnack principle for Denjoy domains},
  journal={Complex Variables Theory Appl.},
  volume={37},
  date={1998},
  number={1-4},
  pages={471--490},
}

\bib{wu-78}{article}{
  author={Wu, Jang Mei G.},
  title={Comparisons of kernel functions, boundary Harnack principle and relative Fatou theorem on Lipschitz domains},
  language={English, with French summary},
  journal={Ann. Inst. Fourier (Grenoble)},
  volume={28},
  date={1978},
  number={4},
  pages={147--167, vi},
}
\end{biblist}
\end{bibdiv}
\vfill
\noindent Dan Mangoubi,\\ 
Einstein Institute of Mathematics,\\
Hebrew University, Givat Ram,\\
Jerusalem 91904,\\
Israel\\
\smallskip
\texttt{\small mangoubi@math.huji.ac.il}

%
\end{document}